\newtheorem{theorem}{Theorem}
\newtheorem{definition}[theorem]{Definition}
\newtheorem{example}[theorem]{Example}
\newtheorem{remark}[theorem]{Remark}
\begin{document}
	
\title[Structural derivatives on time scales]{Structural derivatives on time scales}

\author[B. Bayour]{Benaoumeur Bayour}
\address{Benaoumeur Bayour: University of Mascara, B.P. 305, Mamounia Mascara, Mascara 29000, Algeria.}
\email{b.bayour@univ-mascara.dz}

\author[D. F. M. Torres]{Delfim F. M. Torres}
\address{Delfim F. M. Torres: University of Aveiro,
Center for Research and Development in Mathematics and Applications (CIDMA),
Department of Mathematics, 3810-193 Aveiro, Portugal.}
\email{delfim@ua.pt}
\urladdr{http://orcid.org/0000-0001-8641-2505}


\date{Received: May 14, 2018; Revised: Nov 18, 2018; Accepted: Nov 23, 2018}

\subjclass[2010]{26A33; 26E70.}

\keywords{Hausdorff derivative of a function with respect to a fractal measure;
structural and fractal derivatives; 
self-similarity; 
time scales; 
Hilger derivative of non-integer order.}

\thanks{This paper is in final form and no version of it will be submitted
for publication elsewhere.}


\begin{abstract}
We introduce the notion of structural derivative on time scales.
The new operator of differentiation unifies the concepts of fractal 
and fractional order derivative and is motivated by lack of classical 
differentiability of some self-similar functions. Some properties 
of the new operator are proved and illustrated with examples.
\end{abstract}

\maketitle


\section{Introduction}

In the past few years, several operators of differentiation 
have been investigated by researchers, from almost all branches
of sciences, technology, and engineering, due to their capabilities
of better modeling and predict complex systems \cite{ssp,wer,yba}.
In \cite{wc2}, the concept of \emph{Hausdorff derivative} of a function $f(t)$ 
with respect to a fractal measure of $t$ is introduced:
\begin{equation}
\label{fractal:der}
\frac{d f(t)}{d t^{\alpha}}
=\lim_{s\mapsto t} \frac{f(t)-f(s)}{t^{\alpha}-s^{\alpha}}.
\end{equation}
In order to describe a rather large number of experimental results in Biomedicine,
related to the structure of the diffusion of magnetic resonance imaging signals in
human brain regions, the \emph{structural derivative} is defined in \cite{yba} as
\begin{equation}
\label{eq:SD}
\frac{d f(t)}{d_{p} t}
=\lim_{s\mapsto t}\frac{f(t)-f(s)}{p(t)-p(s)},
\end{equation}
where $p(\cdot)$ is the structural function. 
When $p(t)= t^{\alpha}$, the structural derivative \eqref{eq:SD}
coincides with the \emph{fractal derivative} \eqref{fractal:der},
as called in \cite{MR3672032}. It is important to emphasize 
that the structural function $p(t)$ is not necessarily 
a power function. Examples in the literature can be found where $p(t)$
is the inverse Mittag--Leffler function, the probability density function, 
or the stretched exponential function \cite{wc2}. Compared with classical nonlinear models, 
structural differential equations require fewer parameters and lower computational costs 
in detecting causal relationships between mesoscopic time-space structures 
and certain physical behaviors \cite{wh2}. 

Here we generalize the important notion of structural derivative 
to an arbitrary time scale $\mathbb{T}$. As particular cases,
we get the fractional order derivative \cite{KAK}
and the fractional derivative on time scales recently introduced in \cite{MR33}. 
Moreover, we claim that the new \emph{structural derivative on time scales} 
is more than a mathematical generalization, 
allowing to deal with important concepts 
that may appear in complex systems, such as 
self-similarity and non-differentiability 
(see Example~\ref{ex:struct:der:self:sim:f}).

The need for the structural derivative notion, on different time scales 
than the set of real numbers, appears naturally in complex coarse-graininess 
structures, for instance, in anomalous radiation absorption where a tumor 
tissue interacts with the media and the radiation \cite{Weberszpil}.
In such coarse-grained spaces, a point is not infinitely thin, and this 
feature is better modeled by means of our time-scale structural derivative. 
Indeed, in our approach we include a scale in time, allowing to consider 
the effects of internal times on the systems. For examples on the usefulness 
of structural derivatives on the quantum time scale, to model complex systems 
on life, medical, and biological sciences, see also \cite{Weberszpil:Lazo}.
Our structural derivative on time scales allows to unify different  
structural derivatives found in the literature in specific time scales, 
as in the continuous, the discrete, and the quantum scales.

The paper is organized as follows. In Section~\ref{sec:2}, 
we briefly recall the necessary concepts from the time-scale calculus.
Then, in Section~\ref{sec:3}, we introduce the new structural derivative 
on time scales and prove its main proprieties. Illustrative examples 
are given along the text. In Section~\ref{rem:self:sim},
we remark that a function can be structural differentiable 
on a general time scale without being differentiable.
We end with Section~\ref{sec:conc} of conclusions 
and possible future work.


\section{Preliminaries}
\label{sec:2}

A time scale $\mathbb{T}$ is an arbitrary nonempty
closed subset of the real numbers $\mathbb{R}$.
For $t\in\mathbb{T}$, we define the forward jump operator
$\sigma:\mathbb{T}\rightarrow\mathbb{T}$
by $\sigma(t)=\inf\{s\in\mathbb{T}:s>t\}$
and the backward jump operator $\rho:\mathbb{T}\rightarrow\mathbb{T}$
by $\rho(t):=\sup\{s\in\mathbb{T}:s<t\}$.
Then, one defines the graininess function
$\mu:\mathbb{T}\rightarrow[0,+\infty[$
by $\mu(t)= \sigma(t)-t$.
If $\sigma(t)>t$, then we say that $t$ is right-scattered;
if $\rho(t)<t $, then $t$ is left-scattered. Moreover,
if $t < \sup\mathbb{T}$ and $\sigma(t)=t$, then $t$ is called right-dense;
if $t>\inf\mathbb{T}$ and $\rho(t)=t$, then $t$ is called left-dense.
If $\mathbb{T}$ has a left-scattered maximum $m$, then we define
$\mathbb{T}^{\kappa}=\mathbb{T}\setminus\{ m\}$; otherwise
$\mathbb{T}^{\kappa}=\mathbb{T}$.
If $f:\mathbb{T}\rightarrow \mathbb{R}$, then
$f^{\sigma}:\mathbb{T}\rightarrow \mathbb{R}$ is given by
$f^{\sigma}(t)=f(\sigma(t))$ for all $t\in\mathbb{T}$.

\begin{definition}[The Hilger derivative \cite{MBP}]
\label{def:Hder}
Let $f:\mathbb{T}\rightarrow \mathbb{R}$ and $t\in\mathbb{T}$.
We define $f^{\Delta}(t)$ to be the number,
provided it exists, with the property that given any
$\epsilon>0$ there is a neighborhood $U$ of $t$
(i.e., $U=(t-\delta,t+\delta)\cap\mathbb{T}$ for some $\delta>0$) such that
$$
|[f(\sigma(t))-f(s)]-f^{\Delta}(t)[\sigma(t)-s]|\leq\epsilon|\sigma(t)- s|
$$
for all $s\in U$. We call $f^{\Delta}(t)$ the Hilger (or the time-scale)
derivative of $f$ at $t$.
\end{definition}

For more on the calculus on time scales,
we refer the reader to the books \cite{MR1843232,MBP}.


\section{Structural derivatives on time scales}
\label{sec:3}

We introduce the definition
of structural derivative on time scales.
Here we follow the delta/forward approach. 
However, it should be mentioned that such choice is not fundamental 
for our structural derivative notion on time scales. 
In particular, the nabla/backward approach is also possible, with 
the properties we prove here for the delta derivative calculus
being easily mimicked to the nabla case, where instead of using 
the forward $\sigma(t)$ operator we use the backward 
$\rho(t)$ operator of time scales. 

\begin{definition}[The time-scale structural derivative]
\label{def:ourStruct:Der:TS}
Assume $f,p:\mathbb{T}\rightarrow \mathbb{R}$ with
$\mathbb{T}$ a time scale. Let $t\in \mathbb{T}^{\kappa}$
and $\lambda>0$. We define $f^{\Delta_{p}^{\lambda}}(t)$
to be the number, provided it exists, with the property that given
any $\epsilon >0$ there is a neighborhood $U$ of $t$
(i.e., $U=(t-\delta,t+\delta)\cap \mathbb{T}$
for some $\delta>0$) such that
\begin{equation*}
\left|[f^{\lambda}(\sigma(t))-f^{\lambda}(s)]
-f^{\Delta_{p}^{\lambda}}(t)[p(\sigma(t))-p(s)]\right|
\leq \epsilon \left|p(\sigma(t))-p(s)\right|
\end{equation*}
for all $s\in U$. We call $f^{\Delta_{p}^{\lambda}}(t)$
the structural derivative  of $f$ at $t$ (associated with
$\lambda$ and $p(\cdot)$). Moreover, we say that $f$ 
is structural differentiable on $\mathbb{T}^{\kappa}$ 
(or $\Delta_{p}^{\lambda}$-differentiable), provided $f^{\Delta_{p}^{\lambda}}(t)$ 
exists for all $t\in\mathbb{T}^{\kappa}$.
\end{definition}

It is clear that if $\lambda = 1$ and $p(t)=t$, then the new derivative coincides 
with the standard Hilger derivative (i.e., Definition~\ref{def:ourStruct:Der:TS}
reduces to Definition~\ref{def:Hder}). Our first result shows, in particular, 
that for $\mathbb{T} = \mathbb{R}$ and $\lambda = 1$, we obtain from
Definition~\ref{def:ourStruct:Der:TS} the structural derivative \eqref{eq:SD}.

\begin{theorem}
\label{th1}
Assume $f, p:\mathbb{T}\rightarrow \mathbb{R}$ with $\mathbb{T}$
a time scale. Let $t\in \mathbb{T}^{\kappa}$ and $\lambda\in\mathbb{R}$, 
$\lambda>0$. Then the following proprieties hold:
\begin{enumerate}
\item\label{2} If $f$ is continuous at $t$ and $t$ is right-scattered,
then $f$ is  $\Delta_{p}^{\lambda}$-differentiable  at $t$ with
\begin{equation}
\label{eq1}
f^{\Delta_{p}^{\lambda}}(t)
=\frac{f^{\lambda}(\sigma(t))-f^{\lambda}(t)}{p(\sigma(t))-p(t)}.
\end{equation}
\item \label{3} If $t$ is right-dense, then $f$ is structural differentiable
at $t$ if and only if the limit
$$
\lim_{s\rightarrow t}\frac{f^{\lambda}(t)-f^{\lambda}(s)}{p(t)-p(s)}
$$
exists as a finite number. In this case,
\begin{equation}
\label{eq2}
f^{\Delta_{p}^{\lambda}}(t)
=\lim_{s\rightarrow t}\frac{f^{\lambda}(t)-f^{\lambda}(s)}{p(t)-p(s)}.
\end{equation}
\item If $f$ is structural differentiable at $t$, then
$$
f^{\lambda}(\sigma(t))
=f^{\lambda}(t)+(p(\sigma(t))-p(t))f^{\Delta_{p}^{\lambda}}(t).
$$
\end{enumerate}
\end{theorem}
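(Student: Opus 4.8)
The plan is to prove the three parts essentially the way the analogous Hilger-derivative facts are proved in the time-scale literature, since this theorem is the natural transcription of the standard result to the structural setting. The only difference from the classical case is that the function increment $f(\sigma(t))-f(s)$ is replaced by $f^\lambda(\sigma(t))-f^\lambda(s)$ and the denominator $\sigma(t)-s$ is replaced by $p(\sigma(t))-p(s)$. Each part follows from unwinding Definition~\ref{def:ourStruct:Der:TS} and choosing the neighborhood $U$ appropriately. I would treat the parts in the given order, since part~\eqref{3} reuses the bookkeeping of part~\eqref{2} and part~(3) is a one-line consequence of the defining inequality.

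For part~\eqref{2}, I would first use that $t$ right-scattered means $\sigma(t)>t$, so $t$ is isolated on the right and I may shrink $\delta$ until the only relevant point near $t$ on the right forces $s=t$ in the estimate, while the continuity of $f$ at $t$ controls the behavior as $s\to t$. The key move is to verify directly that the candidate value in~\eqref{eq1}, namely $L=\bigl(f^\lambda(\sigma(t))-f^\lambda(t)\bigr)/\bigl(p(\sigma(t))-p(t)\bigr)$, satisfies the defining inequality. Plugging $L$ into $\bigl|[f^\lambda(\sigma(t))-f^\lambda(s)]-L[p(\sigma(t))-p(s)]\bigr|$ and splitting $f^\lambda(\sigma(t))-f^\lambda(s)=[f^\lambda(\sigma(t))-f^\lambda(t)]+[f^\lambda(t)-f^\lambda(s)]$ (and similarly for $p$), the $\sigma(t)$-terms cancel exactly by the definition of $L$, leaving $\bigl|[f^\lambda(t)-f^\lambda(s)]-L[p(t)-p(s)]\bigr|$; continuity of $f$ at $t$ (hence of $f^\lambda$, for $\lambda>0$, provided $f\ge 0$ near $t$ so that $f^\lambda$ makes sense and is continuous) makes both bracketed quantities small as $s\to t$, so for $s$ in a small enough $U$ the whole expression is $\le \epsilon|p(\sigma(t))-p(s)|$. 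Here I expect the implicit standing assumption is that $f^\lambda$ is well-defined and continuous, mirroring the hypothesis "$f$ continuous at $t$".

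For part~\eqref{3}, I would argue the equivalence by reducing to a right-dense point, where $\sigma(t)=t$, so that the defining inequality becomes $\bigl|[f^\lambda(t)-f^\lambda(s)]-f^{\Delta_p^\lambda}(t)[p(t)-p(s)]\bigr|\le\epsilon|p(t)-p(s)|$ for all $s\in U$. Dividing through by $|p(t)-p(s)|$ (valid for $s\neq t$ with $p(s)\neq p(t)$) shows this inequality holds for all $\epsilon>0$ on a neighborhood precisely when the difference quotient $\bigl(f^\lambda(t)-f^\lambda(s)\bigr)/\bigl(p(t)-p(s)\bigr)$ converges to $f^{\Delta_p^\lambda}(t)$ as $s\to t$, which is the claimed limit. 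The forward and backward directions are then just the two readings of this $\epsilon$-characterization of a finite limit.

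For part~(3), I would simply apply the defining inequality with the specific choice $s=t$, which is always admissible since $t\in U$. Then $p(\sigma(t))-p(s)=p(\sigma(t))-p(t)$ and the inequality reads $\bigl|[f^\lambda(\sigma(t))-f^\lambda(t)]-f^{\Delta_p^\lambda}(t)[p(\sigma(t))-p(t)]\bigr|\le\epsilon|p(\sigma(t))-p(t)|$ for every $\epsilon>0$; letting $\epsilon\to0$ forces equality, i.e.\ $f^\lambda(\sigma(t))=f^\lambda(t)+(p(\sigma(t))-p(t))f^{\Delta_p^\lambda}(t)$. The main obstacle, and the point I would be most careful about, is the well-definedness and continuity of $f^\lambda$ when $\lambda$ is not an integer: the proof silently needs $f^\lambda$ to be a continuous function of $t$ wherever $f$ is continuous, which requires $f$ to be nonnegative (or otherwise suitably restricted) near the point in question, and I would state this as a standing hypothesis rather than leave it implicit.
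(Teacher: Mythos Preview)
Your proposal is correct and essentially mirrors the paper's own proof for parts~(1) and~(2): in both cases the argument is the standard $\epsilon$-neighborhood verification, with the paper phrasing part~(1) as ``by continuity, the difference quotient has the claimed limit, hence the defining inequality holds,'' which is exactly what your substitution-and-cancellation argument unpacks. For part~(3) there is a small but genuine difference: the paper proceeds by case analysis on whether $t$ is right-dense (trivial, since $p(\sigma(t))-p(t)=0$) or right-scattered (invoke the formula from part~(1)), whereas you plug $s=t$ directly into the defining inequality and let $\epsilon\to 0$. Your route is slightly more uniform and avoids appealing back to part~(1); the paper's route makes the two cases explicit but is otherwise equivalent. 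Your caution about the well-definedness and continuity of $f^{\lambda}$ for noninteger $\lambda$ is legitimate---the paper does not address it beyond a later remark that $f^{\Delta_{p}^{\lambda}}(t)$ may be complex---but it is a hypothesis issue rather than a gap in the proof strategy.
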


\begin{proof}
(1)  Assume $f$ is continuous at $t$ with $t$ right-scattered.
By continuity,
$$
\lim_{s\rightarrow t}\frac{f^{\lambda}(\sigma(t))
-f^{\lambda}(s)}{p(\sigma(t))-p(s)}
=\frac{f^{\lambda}(\sigma(t))
-f^{\lambda}(t)}{p(\sigma(t))-p(t)}.
$$
Hence, given $\epsilon>0$, there is a neighborhood $U$ of $t$
such that
$$
\left|\frac{f^{\lambda}(\sigma(t))
-f^{\lambda}(s)}{p(\sigma(t))-p(s)}
-\frac{f^{\lambda}(\sigma(t))
-f^{\lambda}(t)}{p(\sigma(t))-p(t)}\right|
\leq\epsilon
$$
for all $s\in U$. It follows that
$$
\left|f^{\lambda}(\sigma(t))
-f^{\lambda}(s)-\frac{f^{\lambda}(\sigma(t))
-f^{\lambda}(t)}{p(\sigma(t))-p(t)}[p(\sigma(t))-p(s)] \right|
\leq \epsilon\left|p(\sigma(t))-p(s) \right|
$$
for all $s\in U$. Hence, we get the desired equality \eqref{eq1}.

(2) Assume $f$ is structural differentiable at $t$ and $t$ is right-dense.
Let $\epsilon>0$ be given. Since $f$ is differentiable at $t$,
there is a neighborhood $U$ of $t$ such that
$$
\mid[f^{\lambda}(\sigma(t))-f^{\lambda}(s)] -f^{\Delta_{p}^{\lambda}}(t)
\left[p(\sigma(t))-p(s) \right]\mid
\leq\epsilon\mid p(\sigma(t))-p(s) \mid
$$
for all $s\in U$. Moreover, because $\sigma(t)=t$, we have that
$$
\mid[f^{\lambda}(t)-f^{\lambda}(s)] 
-f^{\Delta_{p}^{\lambda}}(t)[p(t)
-p(s)]\mid \leq\epsilon\mid p(t)-p(s) \mid
$$
for all $s\in U$. It follows that
$
\left|\frac{f^{\lambda}(t) -f^{\lambda}(s)}{p(t)
-p(s)}-f^{\Delta_{p}^{\lambda}}(t) \right|
\leq\epsilon
$
for all $s\in U$, $s\neq t$, and we get equality \eqref{eq2}. Assume
$
\lim_{s\rightarrow t}
\frac{f^{\lambda}(t)-f^{\lambda}(s)}{p(t)-p(s)}
$
exists and is equal to $\xi$ and $\sigma(t)=t$. Let $\epsilon>0$.
Then there is a neighborhood $U$ of $t$ such that
$$
\left|\frac{f^{\lambda}(\sigma(t))
-f^{\lambda}(s)}{p(t)-p(s)}-\xi\right|
\leq \epsilon
$$
for all $s\in U$. Because
$\mid f^{\lambda}(\sigma(t))
-f^{\lambda}(s)-\xi(p(t)-p(s))\mid
\leq \epsilon |p(t)-p(s)|$
for all $s\in U$,
$$
f^{\Delta_{p}^{\lambda}}(t)=\xi
=\lim_{s\rightarrow t} \frac{f^{\lambda}(t)
-f^{\lambda}(s)}{p(t)-p(s)}.
$$

(3) If $\sigma(t)=t$, then $p(\sigma(t))-p(t)=0$ and
$$
f^{\lambda}(\sigma(t))=f^{\lambda}(t)
=f^{\lambda}(t)+(p(\sigma(t))
-p(t))f^{\Delta_{p}^{\lambda}}(t).
$$
On the other hand, if $\sigma(t)>t$, then by item \ref{2}
\begin{eqnarray*}
f^{\lambda}(\sigma(t))&=&f^{\lambda}(t)+(p(\sigma(t))-p(t))
\frac{f^{\lambda}(\sigma(t))-f^{\lambda}(t)}{p(\sigma(t))-p(t)}\\
&=& f^{\lambda}(t)+(p(\sigma(t))-p(t))f^{\Delta_{p}^{\lambda}}(t)
\end{eqnarray*}
and the proof is complete.
\end{proof}

\begin{example}
If $\mathbb{T}=\mathbb{R}$ and $p(t)=t^{\alpha}$, 
then it follows from Theorem~\ref{th1}
that our structural derivative on time scales reduces
to the generalized fractal-fractional derivative of $f$ 
of order $\alpha$ in \cite{MR3672032}:
$$
f^{\Delta_{p}^{\lambda}}(t)=\lim_{s\rightarrow t}
\frac{f^{\lambda}(t)-f^{\lambda}(s)}{t^{\alpha}-s^{\alpha}}.
$$
\end{example}

\begin{example}
If $\mathbb{T}=\mathbb{R}$, $p(t)=t^{\alpha}$, and $\lambda=\alpha$, 
then item \ref{3} of Theorem~\ref{th1} yields that $f:\mathbb{R}\rightarrow \mathbb{R}$ 
is structural differentiable at $t\in \mathbb{R}$ if, and only if,
$$
f^{\Delta_{p}^{\lambda}}(t)=\lim_{s\rightarrow t}
\frac{f^{\alpha}(t)-f^{\alpha}(s)}{t^{\alpha}-s^{\alpha}}
$$ 
exists. In this case, we get the fractional order derivative
$f^{(\alpha)}$ of \cite{KAK}.
\end{example}

\begin{example}
If $\mathbb{T}=\mathbb{Z}$, then item \ref{2} of Theorem~\ref{th1} yields
that $f:\mathbb{Z}\rightarrow \mathbb{R}$ is structural differentiable
 at $t\in \mathbb{Z}$  with
$$
f^{\Delta_{p}^{\lambda}}(t)=\frac{f^{\lambda}(\sigma(t))
-f^{\lambda}(t)}{p(\sigma(t))-p(t)}
=\frac{f^{\lambda}(t+1)-f^{\lambda}(t)}{p(t+1)-p(t)}.
$$
\end{example}

\begin{example}
\label{example3}
If $f:\mathbb{T}\rightarrow \mathbb{R}$ is defined
by $f(t)\equiv\gamma \in\mathbb{R}$,
then $f^{\Delta_{p}^{\lambda}}(t) \equiv 0$. Indeed,
if $t$ is right-scattered, then by item \ref{2} of Theorem~\ref{th1} we get
$$
f^{\Delta_{p}^{\lambda}}(t)
=\frac{f^{\lambda}(\sigma(t))-f^{\lambda}(t)}{p(\sigma(t))-p(t)}
=\frac{\gamma^{\lambda}-\gamma^{\lambda}}{p(\sigma(t))- p(t)}=0;
$$
if $t$ is right-dense, then by \eqref{eq2} we get
$$
f^{\Delta_{p}^{\lambda}}(t)=\lim_{s\rightarrow t}
\frac{\gamma^{\lambda}-\gamma^{\lambda}}{p(t)- p(s)}=0.
$$
\end{example}

\begin{example}
\label{ex1:1}
If $f:\mathbb{T}\rightarrow \mathbb{R}$ is given by $f(t) = t$,
then $f^{\Delta_{p}^{\lambda}}(t) \neq 1$ because
if $\sigma(t)>t$ (i.e., $t$ is right-scattered), then
$$
f^{\Delta_{p}^{\lambda}}(t)=\frac{f^{\lambda}(\sigma(t))
-f^{\lambda}(t)}{p(\sigma(t))-p(t)}
=\frac{\sigma^{\lambda}(t)-t^{\lambda}}{p(\sigma(t))- p(t)}\neq1;
$$
if $\sigma(t)=t$ (i.e., $t$ is right-dense), then
$$
f^{\Delta_{p}^{\lambda}}(t)=\lim_{s\rightarrow t}\frac{f^{\lambda}(t)
-f^{\lambda}(s)}{p(t)- p(s)}
=\lim_{s\rightarrow t}\frac{t^{\lambda}-s^{\lambda}}{p(t)- p(s)}\neq 1.
$$
\end{example}

\begin{example}
\label{ex1:2}
Let $g:\mathbb{T}\rightarrow \mathbb{R}$ be given by
$g(t) = \frac{1}{t}$. We have 
$$
g^{\Delta_{p}^{\lambda}}(t)
= - \frac{(t)^{\Delta_{p}^{\lambda}}}{\left(\sigma(t)t\right)^{\lambda}}.
$$
Indeed, if $\sigma(t)=t$, then $g^{\Delta_{p}^{\lambda}}(t)
=-\frac{(t)^{\Delta_{p}^{\lambda}}}{\sigma^{\lambda}(t)t^{\lambda}}$;
if $\sigma(t)>t$, then
$$
g^{\Delta_{p}^{\lambda}}(t)
=\frac{g^{\lambda}(\sigma(t))-g^{\lambda}(t)}{p(\sigma(t))- p(t)}
= \frac{\left(\frac{1}{\sigma(t)}\right)^{\lambda}
-\left(\frac{1}{t}\right)^{\lambda}}{p(\sigma(t))- p(t)}
=\frac{\frac{t^{\lambda}-\sigma^{\lambda}(t)}{t^{\lambda}
\sigma^{\lambda}(t)}}{t^{\lambda}
-\sigma^{\lambda}(t)} = - \frac{(t)^{\Delta_{p}^{\lambda}}}{t^{\lambda}\sigma^{\lambda}(t)}.
$$
\end{example}

\begin{example}
\label{ex1:3}
Let $h:\mathbb{T}\rightarrow \mathbb{R}$ be defined by $h(t) = t^{2}$.
We have 
$$
h^{\Delta_{p}^{\lambda}}(t)=(t)^{\Delta_{p}^{\lambda}}(\sigma(t)+t).
$$
Indeed, if $t$ is right-dense, then 
$h^{\Delta_{p}^{\lambda}}(t)=\lim_{s\rightarrow t}
\frac{t^{2\lambda}-s^{2\lambda}}{p(t)- p(s)} 
= (t)^{\Delta_{p}^{\lambda}}(\sigma(t)+t)$; 
if $t$ is right-scattered, then
$$
h^{\Delta_{p}^{\lambda}}(t)=\frac{h^{\lambda}(\sigma(t))
-h^{\lambda}(t)}{p(\sigma(t))- p(t)}=\frac{\sigma^{2\lambda}(t)
-t^{2\lambda}}{p(\sigma(t))- p(t)}=(t)^{\Delta_{p}^{\lambda}}(\sigma(t)+t).
$$
\end{example}

\begin{example}
\label{ex7}
Consider the time scale $\mathbb{T}=h\mathbb{Z}$, $h > 0$.
Let $f$ be the function defined by
$f:h\mathbb{Z}\rightarrow \mathbb{R}$, $t\mapsto (t-c)^2$, $c\in\mathbb{R}$.
The time-scale structural derivative of $f$ at $t$ is
\begin{equation*}
\begin{split}
f^{\Delta_{p}^{\lambda}}(t)
&=\frac{f^{\lambda}(\sigma(t))-f^{\lambda}(t)}{p(\sigma(t))-p(t)}
=\frac{((\sigma(t)-c)^{2})^{\lambda}
-((t-c)^{2})^{\lambda}}{p(\sigma(t))- p(t)}\\
&= \frac{(t+h-c)^{2\lambda}-(t-c)^{2\lambda}}{p(t+h)-p(t)}.
\end{split}
\end{equation*}
\end{example}

\begin{remark}
Our examples show that, in general, 
$f^{\Delta_{p}^{\lambda}}(t)$ is a complex number
(for instance, choose  $\lambda=\frac{1}{2}$ and $t<0$).
\end{remark}

Our second theorem shows that it is possible to develop 
a calculus for the time-scale structural derivative.

\begin{theorem}
\label{th2}
Assume $f,g:\mathbb{T}\rightarrow\mathbb{R}$ are continuous
and structural differentiable  at $t\in \mathbb{T}^{\kappa}$.
Then the following proprieties hold:
\begin{enumerate}
\item For any constant $\gamma$, function
$\gamma f:\mathbb{T}\rightarrow \mathbb{R}$ is structural differentiable
at $t$ with $(\gamma f)^{\Delta_{p}^{\lambda}}(t)
=\gamma^{\lambda} f^{\Delta_{p}^{\lambda}}(t)$.

\item The product $fg:\mathbb{T}\rightarrow \mathbb{R}$ 
is structural differentiable at $t$ with
\begin{equation*}
\begin{split}
(fg)^{\Delta_{p}^{\lambda}}(t)
&=f^{\Delta_{p}^{\lambda}}(t) g^{\lambda}(t)
+f^{\lambda}(\sigma(t)) g^{\Delta_{p}^{\lambda}}(t)\\
&= f^{\Delta_{p}^{\lambda}}(t) g^{\lambda}(\sigma(t))
+f^{\lambda}(t) g^{\Delta_{p}^{\lambda}}(t).
\end{split}
\end{equation*}

\item If $f(t)f(\sigma(t))\neq 0$, then $\frac{1}{f}$
is structural differentiable  at $t$ with
$$
\left(\frac{1}{f}\right)^{\Delta_{p}^{\lambda}}(t)
=\frac{-f^{\Delta_{p}^{\lambda}}(t)}{f^{\lambda}(\sigma(t))f^{\lambda}(t)}.
$$

\item If $g(t)g(\sigma(t))\neq 0$, then
$\frac{f}{g}$  is structural differentiable  at $t$ with
$$
\left(\frac{f}{g}\right)^{\Delta_{p}^{\lambda}}(t)
= \frac{f^{\Delta_{p}^{\lambda}}(t) g^{\lambda}(t)
-f^{\lambda}(t) g^{\Delta_{p}^{\lambda}}(t)}{g^{\lambda}(\sigma(t))g^{\lambda}(t)}.
$$
\end{enumerate}
\end{theorem}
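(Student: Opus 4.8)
The plan is to treat each item by reducing to the two exhaustive cases supplied by Theorem~\ref{th1}: when $t$ is right-scattered I use the explicit quotient \eqref{eq1}, and when $t$ is right-dense I use the limit characterization \eqref{eq2}. Throughout, the computation rests on the elementary power identities $(\gamma f)^{\lambda}=\gamma^{\lambda}f^{\lambda}$, $(fg)^{\lambda}=f^{\lambda}g^{\lambda}$ and $(1/f)^{\lambda}=1/f^{\lambda}$, so that the structural derivative behaves exactly like a Hilger-type derivative of the composite $f^{\lambda}$ with respect to the structural weight $p$. This is the key structural observation: every classical time-scale product/quotient argument transcribes once $f,g$ are replaced by $f^{\lambda},g^{\lambda}$ and $\sigma(t)-s$ by $p(\sigma(t))-p(s)$. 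For item (1) I would simply factor $\gamma^{\lambda}$ out of the numerator, using $(\gamma f)^{\lambda}(\sigma(t))-(\gamma f)^{\lambda}(s)=\gamma^{\lambda}[f^{\lambda}(\sigma(t))-f^{\lambda}(s)]$, and then divide directly (right-scattered) or pass to the limit (right-dense).

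For the product rule in item (2), the first expression follows from an add-and-subtract step. In the right-scattered case I write
\[
(fg)^{\Delta_{p}^{\lambda}}(t)=\frac{f^{\lambda}(\sigma(t))g^{\lambda}(\sigma(t))-f^{\lambda}(t)g^{\lambda}(t)}{p(\sigma(t))-p(t)},
\]
insert $\pm\,f^{\lambda}(\sigma(t))g^{\lambda}(t)$ in the numerator, and split into two quotients giving $f^{\lambda}(\sigma(t))g^{\Delta_{p}^{\lambda}}(t)+g^{\lambda}(t)f^{\Delta_{p}^{\lambda}}(t)$. In the right-dense case the same splitting combined with the limit laws and the continuity of $g$ (so that $g^{\lambda}(s)\to g^{\lambda}(t)$) yields the identical formula, since there $\sigma(t)=t$. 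The second expression then follows either by symmetry (inserting $\pm\,f^{\lambda}(t)g^{\lambda}(\sigma(t))$ instead) or, more directly, from item (3) of Theorem~\ref{th1}, substituting $f^{\lambda}(\sigma(t))=f^{\lambda}(t)+(p(\sigma(t))-p(t))f^{\Delta_{p}^{\lambda}}(t)$ and the analogous expansion for $g$; the crossed term $(p(\sigma(t))-p(t))f^{\Delta_{p}^{\lambda}}(t)g^{\Delta_{p}^{\lambda}}(t)$ is then seen to be common to both forms, so they coincide.

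Item (3) is a direct computation: since $(1/f)^{\lambda}=1/f^{\lambda}$, the difference of reciprocals $\frac{1}{f^{\lambda}(\sigma(t))}-\frac{1}{f^{\lambda}(t)}=\frac{f^{\lambda}(t)-f^{\lambda}(\sigma(t))}{f^{\lambda}(\sigma(t))f^{\lambda}(t)}$ produces the factor $-f^{\Delta_{p}^{\lambda}}(t)/(f^{\lambda}(\sigma(t))f^{\lambda}(t))$ after dividing by $p(\sigma(t))-p(t)$ (right-scattered) or taking the limit and using continuity (right-dense); the hypothesis $f(t)f(\sigma(t))\neq 0$ guarantees the denominators do not vanish. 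Finally, item (4) I would not prove from scratch but deduce by writing $f/g=f\cdot(1/g)$ and applying the \emph{second} form of the product rule from item (2) together with item (3). The choice of the second form is what makes $g^{\lambda}(t)$ (rather than $g^{\lambda}(\sigma(t))$) appear in the right place and delivers the stated quotient formula after combining over the common denominator $g^{\lambda}(\sigma(t))g^{\lambda}(t)$.

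I expect the only genuine care to be needed in the right-dense case, where the \emph{existence} of the derivative of the product/quotient must be justified, not merely its value; this follows from the existence of $f^{\Delta_{p}^{\lambda}}(t)$ and $g^{\Delta_{p}^{\lambda}}(t)$ together with continuity, via the limit laws applied to the split difference quotients. The remaining work is purely bookkeeping: applying the power identities $(fg)^{\lambda}=f^{\lambda}g^{\lambda}$ and $(1/f)^{\lambda}=1/f^{\lambda}$ consistently, and matching the two equivalent product forms through item (3) of Theorem~\ref{th1}.
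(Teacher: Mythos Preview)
Your proposal is correct but follows a genuinely different route from the paper. You argue by the dichotomy of Theorem~\ref{th1}, handling right-scattered and right-dense points separately via the explicit quotient \eqref{eq1} and the limit \eqref{eq2}, and you prove item~(3) by direct manipulation of the difference of reciprocals. The paper instead works directly from Definition~\ref{def:ourStruct:Der:TS}: for items~(1) and~(2) it carries out full $\epsilon$--$\delta$ arguments (choosing a rescaled $\epsilon^{\ast}$ and, for the product, intersecting three neighborhoods coming from the differentiability of $f$, of $g$, and from the continuity of $f$), and it then derives item~(3) not by direct computation but as a consequence of item~(2) together with Example~\ref{example3}, via $\bigl(f\cdot\tfrac{1}{f}\bigr)^{\Delta_{p}^{\lambda}}(t)=0$. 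Item~(4) is obtained the same way in both approaches. Your method is shorter and avoids the $\epsilon$--$\delta$ bookkeeping by outsourcing it to Theorem~\ref{th1}; the paper's method is more self-contained in that it never needs to split into cases, and its treatment of item~(3) is slicker since it falls out of the product rule already established. Either argument is acceptable.
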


\begin{proof}
(1) Let $\epsilon\in(0,1)$. Define
$\epsilon^{\ast}=\frac{\epsilon}{|\gamma|^{\lambda}}\in(0,1)$. 
Then there exists a neighborhood $U$ of $t$ such that
$$
|f^{\lambda}(\sigma(t))-f^{\alpha}(s)-f^{\Delta_{p}^{\lambda}}(t)(p(\sigma(t))
-p(s))|\leq\epsilon^{\ast} |p(\sigma(t))- p(s)|
$$
for all $s\in U $. It follows that
\begin{equation*}
\begin{split}
|(\gamma f)^{\lambda}&(\sigma(t))
-(\gamma f)^{\lambda}(s)-\gamma^{\lambda} f^{\Delta_{p}^{\lambda}}(t)
(p(\sigma(t)-p(s))|\\
&= |\gamma|^{\lambda}\mid f^{\lambda}(\sigma(t))-f^{\lambda}(s)
-f^{\Delta_{p}^{\lambda}}(t)(p(\sigma(t))-p(s))\mid  \\
&\leq \epsilon^{\ast}|\gamma|^{\lambda}|p(\sigma(t))- p(s)|\\
&\leq \frac{\epsilon}{|\gamma|^{\lambda}}|\gamma|^{\lambda}|
p(\sigma(t))- p(s)|\\
&= \epsilon|p(\sigma(t))- p(s)|
\end{split}
\end{equation*}
for all $s\in U$. Thus, $(\gamma f)^{\Delta_{p}^{\lambda}}(t)
=\gamma^{\lambda} f^{\Delta_{p}^{\lambda}}(t)$ holds.

(2) Let $\epsilon\in (0,1)$. Define 
$$
\epsilon^{\ast}
=\epsilon \left[1+ \left|f^{\lambda}(t)|+|g^{\lambda}(\sigma(t))\right|
+\left|g^{\Delta_{p}^{\lambda}}(\sigma(t))\right|\right]^{-1}. 
$$
Then $\epsilon^{\ast}\in(0,1)$ and there exist neighborhoods
$U_{1},U_{2}$ and $U_{3}$ of $t$ such that
$$
|f^{\lambda}(\sigma(t))-f^{\lambda}(s)-f^{\Delta_{p}^{\lambda}}(t)(p(\sigma(t))
-p(s))|\leq\epsilon^{\ast} |p(\sigma(t))-p(s)|
$$
for all $s\in U_{1}$,
$$
\left|g^{\lambda}(\sigma(t))-g^{\lambda}(s)
-g^{\Delta_{p}^{\lambda}}(t)\left(p(\sigma(t))-p(s)\right)\right| 
\leq \epsilon^{\ast} \left|p(\sigma(t))-p(s)\right|
$$
for all $s\in U_{2}$, and ($f$ is continuous)
$|f(t)-f(s)|\leq\epsilon^{\ast}$ for all $s\in U_{3}$.
Define $U=U_{1}\cap U_{2}\cap U_{3}$ and let $s\in U$. It follows that
\begin{align*}
\Big|&(fg)^{\lambda}(\sigma(t))-(fg)^{\lambda}(s)
-\left[g^{\Delta_{p}^{\lambda}}(t)f^{\lambda}(t)
+ g^{\lambda}(\sigma(t)) f^{\Delta_{p}^{\lambda}}(t)\right][p(\sigma(t))- p(s)]\Big|\\
&= \Big|[f^{\lambda}(\sigma(t))-f^{\lambda}(s)
-f^{\Delta_{p}^{\lambda}}(t)(p(\sigma(t))-p(s))]\left(g^{\Delta_{p}^{\lambda}}(t)\right)
+g^{\lambda}(\sigma(t))f^{\lambda}(s) \\
&\quad + g^{\lambda}(\sigma(t))
f^{\Delta_{p}^{\lambda}}(t)(p(\sigma(t)
-p(s))-f^{\lambda}(s)g^{\lambda}(s)\\
&\quad -\left[g^{\Delta_{p}^{\lambda}}(t)f^{\lambda}(t)
+g^{\lambda}(\sigma(t))f^{\Delta_{p}^{\lambda}}(t)\right] [p(\sigma(t))-p(s) ]\Big|\\
&= \Big|[f^{\lambda}(\sigma(t))-f^{\lambda}(s)
-f^{\Delta_{p}^{\lambda}}(t)(p(\sigma(t))- p(s))](g^{\lambda}(\sigma(t)))\\
&\quad +\left[g^{\lambda}(\sigma(t))-g^{\lambda}(s)-g^{\Delta_{p}^{\lambda}}(t)(p(\sigma(t))
-p(s))\right] f^{\lambda}(t)\\
&\quad +\left[g^{\lambda}(\sigma(t))-g^{\lambda}(s)
-g^{\Delta_{p}^{\lambda}}(t)(p(\sigma(t))-p(s))\right]\left(f^{\lambda}(s)
-f^{\lambda}(t)\right)+ f^{\lambda}(s) g^{\lambda}(s)\\
&\quad +g^{\Delta_{p}^{\lambda}}(t) f^{\lambda}(s)(p(\sigma(t))-p(s))
+ g^{\lambda}(\sigma(t))f^{\Delta_{p}^{\lambda}}(t)(p(\sigma(t))-p(s))
-g^{\lambda}(s)f^{\lambda}(s)\\
&\quad + g^{\Delta_{p}^{\lambda}}(t)
f^{\lambda}(s)(p(\sigma(t))- p(s))
+ g^{\lambda}(\sigma(t)) f^{\Delta_{p}^{\lambda}}(t)(p(\sigma(t))-p(s))
-f^{\lambda}(s)g^{\lambda}(s)\\
&\quad - \left[g^{\Delta_{p}^{\lambda}}(t)f^{\lambda}(t)
+g^{\lambda}(\sigma(t)) f^{\Delta_{p}^{\lambda}}(t)\right][p(\sigma(t))-p(s)]\Big|\\
&\leq \left|f^{\lambda}(\sigma(t))-f^{\lambda}(s)-f^{\Delta_{p}^{\lambda}}(t)
(p(\sigma(t))-p(s))\right| \left|g^{\lambda}(\sigma(t))\right|\\
&\quad + \left|g^{\lambda}(\sigma(t))-g^{\lambda}(s)
-g^{\Delta_{p}^{\lambda}}(t)(p(\sigma(t))
-p(s))\right| \left|f^{\lambda}(t)\right|\\
&\quad +\left|g^{\lambda}(\sigma(t))-g^{\lambda}(s)
-g^{\Delta_{p}^{\lambda}}(t)(p(\sigma(t))-p(s))\right|
\left|f^{\lambda}(s)-f^{\lambda}(t)\right|\\
&\quad +\left|g^{\Delta_{p}^{\lambda}}(t)\right|
\left|f^{\lambda}(t)-f^{\lambda}(s)\right|
\left|p(\sigma(t))-p(s)\right|\\
&= \epsilon^{\ast}\left|g^{\lambda}(\sigma(t))\right|
\left|p(\sigma(t))-p(s)\right|\\
&\quad +\epsilon^{\ast}\left|f^{\lambda}(t)\right|
\left|p(\sigma(t))-p(s)\right|
+\epsilon^{\ast}|p(\sigma(t))-p(s) |\epsilon^{\ast}
+\epsilon^{\ast}\left|g^{\Delta_{p}^{\lambda}}(t)\right|
|p(\sigma(t))-p(s)|\\
&\leq \epsilon^{\ast}
|p(\sigma(t))-p(s)|
\left(\epsilon^{\ast}
+\left|f^{\lambda}(t)\right|
+\left|g^{\Delta_{p}^{\lambda}}(t)\right|
+\left|g^{\Delta_{p}^{\lambda}}(t)\right|\right)\\
&\leq \epsilon^{\ast} \left|p(\sigma(t))-p(s)\right|
\left(1+\left|f^{\lambda}(t)\right|+\left|g^{\Delta_{p}^{\lambda}}(t)\right|
+\left|g^{\Delta_{p}^{\lambda}}(t)\right|\right)
= \epsilon \left|p(\sigma(t))-p(s)\right|.
\end{align*}
Thus $(fg)^{\Delta_{p}^{\lambda}}(t)=f^{\lambda}(t) g^{\Delta_{p}^{\lambda}}(t)
+f^{\Delta_{p}^{\lambda}}(t)g^{\lambda}(\sigma(t))$ holds at $t$.
The other product rule follows from this last
equality by interchanging functions $f$ and $g$.

(3) We use the structural derivative of a constant (Example~\ref{example3}).
Since 
$$
\left(f \cdot \frac{1}{f}\right)^{\Delta_{p}^{\lambda}}(t)=0,
$$
it follows from item 2 that
$$
\left(\frac{1}{f}\right)^{\Delta_{p}^{\lambda}}(t)
f^{\lambda}(\sigma(t)) + f^{\Delta_{p}^{\lambda}}(t)\frac{1}{f^{\lambda}(t)}=0.
$$
Because we are assuming $f(t)f(\sigma(t))\neq 0$, one has
$$
\left(\frac{1}{f}\right)^{\Delta_{p}^{\lambda}}(t)
=\frac{-f^{\Delta_{p}^{\lambda}}(t)}{f^{\lambda}(\sigma(t))f^{\lambda}(t)}.
$$

(4) For the quotient formula we use items 2 and 3 to compute
\begin{equation*}
\begin{split}
\left(\frac{f}{g}\right)^{\Delta_{p}^{\lambda}}(t)
&=\left(f \cdot \frac{1}{g}\right)^{\Delta_{p}^{\lambda}}(t)\\
&=f^{\lambda}(t) \left(\frac{1}{g}\right)^{\Delta_{p}^{\lambda}}(t)
+f^{\Delta_{p}^{\lambda}}(t)\frac{1}{g^{\lambda}(\sigma(t))}\\
&=-f^{\lambda}(t)\frac{g^{\Delta_{p}^{\lambda}}(t)}{g^{\lambda}(\sigma(t))
g^{\lambda}(t)}+f^{\Delta_{p}^{\lambda}}(t)
\frac{1}{g^{\lambda}(\sigma(t))}\\
&=\frac{f^{\Delta_{p}^{\lambda}}(t)g^{\lambda}(t)
-f^{\lambda}(t) g^{\Delta_{p}^{\lambda}}(t)}{g^{\lambda}(\sigma(t))g^{\lambda}(t)}.
\end{split}
\end{equation*}
This concludes the proof.
\end{proof}

\begin{remark}
The structural derivative
of the sum $f+g:\mathbb{T}\rightarrow \mathbb{R}$
does not satisfy the usual property, that is, in general
$(f+g)^{\Delta_{p}^{\lambda}}(t)
\neq f^{\Delta_{p}^{\lambda}}(t) + g^{\Delta_{p}^{\lambda}}(t)$.
For instance, let $\mathbb{T}$ be an arbitrary time scale and
$f,g : \mathbb{T}\rightarrow \mathbb{R}$ be functions defined by 
$f(t)=t$ and $g(t) = 2t$. One can easily find that
$$
(f+g)^{\Delta_{p}^{\lambda}}(t)
=3^{\lambda}\left(\frac{\sigma^{\lambda}(t)-t^{\lambda}}{p(\sigma(t))-p(t)}\right)
\neq f^{\Delta_{p}^{\lambda}}(t)+g^{\Delta_{p}^{\lambda}}(t)
=(1+2^{\lambda})\frac{\sigma^{\lambda}(t)-t^{\lambda}}{p(\sigma(t))-p(t)}.
$$
\end{remark}


\section{A remark on self-similarity and nondifferentiabilty}
\label{rem:self:sim}

In this section, we provide an example where it is natural to define structural 
derivatives on time scales. Precisely, we consider a function that is structural 
differentiable on a general time scale without being differentiable in the classical 
sense. This possibility, to differentiate nonsmooth functions, is very important 
in real world applications, e.g., to deal with models of hydrodynamics continuum 
flows in fractal coarse-grained (fractal porous) spaces, which are discontinuous 
in the embedding Euclidean space \cite{MR2150460}.

A self-similar function is a function that exhibits 
similar patterns when one changes the scale 
of observation: the patterns generated by $f(t)$ 
and $f(a t)$, $a>0$, looks the same. Formally, $f$ is a 
self-similar function of order $\beta$ if it satisfies 
\begin{equation*}
f(a t)=a^{\beta} f(t), \quad a>0, \quad \beta>0,
\end{equation*}
which is interpreted as saying that in the vicinity of $t$ and $at$ 
the function looks the same. A self-similar function $f$ 
obviously satisfies  $f(0)=0$ and, furthermore,  
$$
f(t)=c t^{\beta}, \quad c = f(1).
$$
Let $0<\beta<1$. Then $f(t)$ is clearly not differentiable at $t=0$.
Example~\ref{ex:struct:der:self:sim:f} shows, however, that 
$f(t)$ can be structurally differentiable at $t=0$,
in the sense of our Definition~\ref{def:ourStruct:Der:TS},
on a general time scale.  

\begin{example}
\label{ex:struct:der:self:sim:f}
Let $0<\beta<1$, $\lambda>0$, and $\alpha < \beta \lambda$.
Let $\mathbb{T}$ be any time scale containing the origin, i.e., $0 \in \mathbb{T}$;
$f : \mathbb{T}\rightarrow \mathbb{R}$ be the self-similar function $f(t) = c t^\beta$;  
and choose the structural function $p : \mathbb{T}\rightarrow \mathbb{R}$
to be $p(t)=t^{\alpha}$. It follows from Theorem~\ref{th1} that 
if point $t = 0$ is right-dense, then
\begin{equation*}
\begin{split}
f^{\Delta_{p}^{\lambda}}(0) &=
\lim_{t\mapsto 0}\frac{f^{\lambda}(t)-f^{\lambda}(0)}{p(t)- p(0)} 
= \lim_{t\mapsto 0} \left[c^\lambda t^{\beta\lambda - \alpha}\right]\\
&= 0;
\end{split}
\end{equation*}
if point $t = 0$ is right-scattered, then
\begin{equation*}
\begin{split}
f^{\Delta_{p}^{\lambda}}(0)
&= \frac{f^{\lambda}(\sigma(0))-f^{\lambda}(0)}{p(\sigma(0))- p(0)}\\
&= c^\lambda \sigma(0)^{\beta \lambda - \alpha}.
\end{split}
\end{equation*}
We conclude that $f$ is always $\Delta_{p}^{\lambda}$-differentiable at $t = 0$.
\end{example}


\section{Conclusion}
\label{sec:conc}

We introduced, for the first time, 
the notion of structural derivative on time scales.
The developed calculus allows to unify and extend
several decay models found in the literature.
A nice mathematical example, showing the necessity to define 
structural derivatives on time scales, was given with respect to
self-similar functions in Section~\ref{rem:self:sim}. 
We claim that the new results here obtained
may serve as key tools to model complex systems,
for example in physics, life, and biological
sciences. As future work, one shall develop such
real world models, clearly showing the usefulness
of the structural derivative on time scales. From the
theoretical side, we can proceed with structural 
measures and structural Lebesgue integration.  


\section*{Acknowledgements}

This research was initiated during a one month visit of Bayour 
to the Department of Mathematics of University of Aveiro, Portugal, 
February and March 2018. The hospitality of the host institution 
and the financial support of University of Mascara, Algeria, 
are here gratefully acknowledged. Torres was supported by Portuguese 
funds through CIDMA and FCT, within project UID/MAT/04106/2019. 

The authors are grateful to two anonymous referees, 
for several constructive remarks, suggestions, and questions. 



\end{document}